\DeclareFontFamily{U}{rcjhbltx}{}
\DeclareFontShape{U}{rcjhbltx}{m}{n}{<->rcjhbltx}{}
\DeclareSymbolFont{hebrewletters}{U}{rcjhbltx}{m}{n}
\let\aleph\relax\let\beth\relax
\DeclareMathSymbol{\aleph}{\mathord}{hebrewletters}{39}
\DeclareMathSymbol{\beth}{\mathord}{hebrewletters}{98}
\newenvironment{enumerate*}%
  {\begin{enumerate}[(I)]%
    \setlength{\itemsep}{10pt}%
    \setlength{\parskip}{0pt}}%
  {\end{enumerate}}
\newtheorem{theorem}{Theorem}[section]
\newtheorem{conjecture}[theorem]{Conjecture}
\theoremstyle{definition}
\title{Rearranging small sets for distinct partial sums}
\author[]{Noah Kravitz}
\address[]{Department of Mathematics, Princeton University, Princeton, NJ 08540, USA}
\email{nkravitz@princeton.edu}
\begin{document}
\maketitle

\begin{abstract}
A conjecture of Graham (repeated by Erd\H{o}s) asserts that for any set $A \subseteq \mathbb{F}_p \setminus \{0\}$, there is an ordering $a_1, \ldots, a_{|A|}$ of the elements of $A$ such that the partial sums $a_1, a_1+a_2, \ldots, a_1+a_2+\cdots+a_{|A|}$ are all distinct.  We give a very short proof of this conjecture for sets $A$ of size at most $\log p/\log\log p$.
\end{abstract}

\section{A conjecture of Graham}

For $A$ a finite subset of an abelian group, say that an ordering $a_1, \ldots, a_{|A|}$ of the elements of $A$ is \emph{valid} if the partial sums $a_1, a_1+a_2, \ldots, a_1+a_2+\cdots+a_{|A|}$ are all distinct.  The following striking conjecture first appeared in a 1971 open problem list of Ronald Graham \cite{graham} and was later repeated in a book of Erd\H{o}s and Graham \cite{EG} (see also \cite[Problem \#475]{Bloom}).

\begin{conjecture}[\cite{graham}]\label{conj:main}
Let $p$ be a prime.  Then every subset $A \subseteq \mathbb{F}_p \setminus\{0\}$ has a valid ordering.
\end{conjecture}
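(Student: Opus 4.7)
The plan is to prove the conjecture by partitioning the problem according to $|A|$ into a \emph{small} regime, an \emph{intermediate} regime, and a \emph{large} regime, and applying a different technique in each. Throughout, let $n=|A|$ and write $S_k = a_1+\cdots+a_k$ for the partial sums of a candidate ordering, with $S_0 = 0$.

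\textbf{Small regime} ($n$ at most of order $\log p/\log\log p$). Here I would run a greedy construction with careful bookkeeping. Having built a valid prefix of length $k$, the set of \emph{forbidden} next elements is $\{S_j - S_k : 0 \leq j \leq k-1\}$, a set of size at most $k$. Since $n$ is extremely small compared to $p$, one can avoid these forbidden values by a counting argument, provided one also tracks, for each remaining element $a \in A$, how many future slots would become infeasible if $a$ were used now. This is essentially the regime addressed by the short proof mentioned in the abstract.

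\textbf{Large regime} ($n \geq p - c\log p/\log\log p$). Here the complement $A^c := (\mathbb{F}_p\setminus\{0\})\setminus A$ is small. A valid ordering of $A$ is equivalent to a choice of $|A|+1$ distinct partial sums starting at $0$ with prescribed consecutive differences forming a permutation of $A$; equivalently, the \emph{missed} values $\mathbb{F}_p\setminus\{S_0,\dots,S_n\}$ form a set of size $|A^c|$ or $|A^c|+1$. I would set up a dual problem on this small missed set — essentially, choose which values in $\mathbb{F}_p$ the partial sums will omit — and reduce it to the small regime via a change of variables exploiting that summing all of $\mathbb{F}_p\setminus\{0\}$ gives $0$.

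\textbf{Intermediate regime.} This is where the argument must work hardest. I would use an absorber-based random ordering strategy: first, extract from $A$ a small \emph{absorber} $B$ with the property that, for every \emph{approximately valid} ordering of $A\setminus B$ (one with only a controlled number of collisions among its partial sums), one can interleave the elements of $B$ to produce a fully valid ordering of $A$. Second, take a uniformly random ordering of $A\setminus B$ and use first- and second-moment bounds to show that collisions among partial sums are rare with positive probability. Third, invoke the absorbing property of $B$ to repair the surviving collisions.

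\textbf{Main obstacle.} The hard part will be constructing the absorber $B$ for an arbitrary set $A$. One needs $B$ to contain many ``swap gadgets'' — small configurations (e.g., triples $b_1,b_2,b_3 \in B$ with prescribed pairwise sums, or short arithmetic progressions in $B$) that allow us to shift a specified partial sum by a specified amount without perturbing the others. For structured $A$ (rich in progressions or cosets) such gadgets are easy to find; for pseudorandom $A$ one would hope to produce them via a probabilistic selection, and then prove that with high probability the resulting $B$ can correct any collision pattern of the relevant size. Balancing the flexibility of $B$ (which pushes $|B|$ up) against the concentration of collision counts for the random ordering on $A\setminus B$ (which pushes $|B|$ down) is the crux, and the technical heart of any proof along these lines.
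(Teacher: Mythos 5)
This statement is Graham's conjecture, which the paper does not prove and which remains open; the paper only establishes the special case $|A| \leq \log p/\log\log p$ (Theorem~\ref{thm:main}). Your proposal is a research program rather than a proof, and the gap is concrete: the entire intermediate regime rests on an absorber set $B$ that is never constructed. You yourself identify its construction as ``the crux,'' and the difficulty is real --- an absorber must be able to repair an arbitrary collision pattern arising from an arbitrary set $A$, including highly structured sets (long arithmetic progressions, near-cosets) for which random orderings of $A \setminus B$ have many forced near-collisions and for which ``swap gadgets'' interact badly with the global structure. The large regime is likewise unsubstantiated: the asserted reduction of the dual ``missed values'' problem to the small regime via a change of variables is not carried out, and the known results for $|A| \geq p-3$ in the literature use genuinely different (polynomial-method and constructive) arguments, which suggests no such easy reduction exists.

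Even your small regime, which is the only part the paper actually handles, is not a proof as written. A greedy argument that merely avoids the $k$ forbidden values $\{S_j - S_k : 0 \leq j \leq k-1\}$ breaks down once $k$ exceeds the number of unused elements $n-k$, i.e., for the second half of the ordering; the extra ``bookkeeping'' you invoke to track which future slots become infeasible is precisely where the difficulty lives, and no mechanism is given for it. The paper takes a different and complete route for this regime: it rectifies $A \cup \{0\}$ into $\mathbb{Z}$ via an $(|A|-1)$-Freiman isomorphism (Theorem~\ref{thm:rectification}), which is where the $\log p/\log\log p$ bound comes from, and then proves the integer version (Theorem~\ref{thm:integers}) by a clean induction that orders the positive elements before the negative ones and removes one positive element chosen so that no new coincidence of partial sums is created. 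If you want to salvage your outline, the small regime should be replaced by this rectification argument, and the other two regimes would each require new ideas that are not present in the proposal.
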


Although many papers have been written about this conjecture and related problems, the state of the art is still essentially that Conjecture~\ref{conj:main} holds when $|A| \leq 12$ (see \cite{CP} and the references therein) and when $A$ is a non-zero sum set of size $p-2$ or $p-3$ (see \cite{HOS} and the references therein).  Many of the arguments for small $A$ use the Polynomial Method and rely on Alon's Combinatorial Nullstellensatz.

Alspach (as attributed in \cite{BH}) independently posed a very similar conjecture for finite cyclic groups, and versions in other groups (both abelian and nonabelian) have been studied; for further history and more extensive references, see the recent papers \cite{CDF,long}.  We also mention that this line of inquiry is related to combinatorial designs and the Hall--Paige Conjecture, as described in \cite{MP}.

In this short note, we make modest partial progress towards Conjecture~\ref{conj:main} by showing that it holds for small sets $A$; the novelty is that our bound $\log p/\log\log p$ tends to infinity with $p$.

\begin{theorem}\label{thm:main}
Let $p$ be a prime.  Then every subset $A \subseteq \mathbb{F}_p \setminus\{0\}$ of size
$$ |A| \leq \frac{\log p}{\log \log p}$$
has a valid ordering.
\end{theorem}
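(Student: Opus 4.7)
My plan is to use Alon's Combinatorial Nullstellensatz applied to the polynomial
\[
F(x_1,\ldots,x_n) := \prod_{1\le i<j\le n}(x_{i+1}+x_{i+2}+\cdots+x_j)
\]
of degree $\binom{n}{2}$, whose nonvanishing at $(a_{\sigma(1)},\ldots,a_{\sigma(n)})$ is equivalent to the validity of the ordering $\sigma$ (since the factor indexed by $(i,j)$ is exactly $S_j^{\sigma}-S_i^{\sigma}$).

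The first step I would carry out is the coefficient computation $[x_2 x_3^2\cdots x_n^{n-1}]F=1$. When one picks a single variable out of each of the $\binom{n}{2}$ factors in the expansion of $F$, a reverse induction on $k$ from $n$ down to $2$ shows that $x_k$ must be contributed precisely from each of the $k-1$ factors of the form $(x_{i+1}+\cdots+x_k)$: the factors with second index $j>k$ have been committed to $x_j$ at earlier stages of the induction, leaving only the $k-1$ factors with $j=k$ to supply the $k-1$ copies of $x_k$. Since $\binom{n}{2}=0+1+\cdots+(n-1)$ matches $\deg F$ and the coefficient is $1\ne 0$ in $\mathbb{F}_p$, the Combinatorial Nullstellensatz applied with the sets $S_k=A$ (each of size $n\ge k$) produces a tuple $(x_1,\ldots,x_n)\in A^n$ with $F(x)\ne 0$, i.e., a length-$n$ sequence drawn from $A$ whose $n$ partial sums are pairwise distinct.

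The main obstacle is that a priori this tuple need not be a permutation of $A$; it may repeat some elements. This is exactly where the hypothesis $|A|\le \log p/\log\log p$ should be invoked, through the equivalent inequality $n^n\le p$ (and in particular $n!\le p$). I would attempt to bridge from sequence to permutation either through a quantitative form of the Combinatorial Nullstellensatz---which guarantees at least $n!$ such tuples $(x_1,\ldots,x_n)\in A^n$---combined with a pigeonhole argument based on $n!\le p$, or by applying the Nullstellensatz to cleverly restricted sets $S_k\subsetneq A$ that automatically force the coordinates to be distinct. I expect this passage from sequence to permutation to be the subtlest part of the argument and the unique place where the precise $\log p/\log\log p$ bound enters.
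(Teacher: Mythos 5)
Your first step is fine: for $F(x_1,\ldots,x_n)=\prod_{1\le i<j\le n}(x_{i+1}+\cdots+x_j)$ the reverse induction does show $[x_2x_3^2\cdots x_n^{n-1}]F=1$, and the Combinatorial Nullstellensatz with $S_k=A$ then yields a tuple in $A^n$ with distinct partial sums. But this is exactly where the argument stops being a proof: what you obtain is a length-$n$ sequence of elements of $A$ \emph{with repetitions allowed}, and neither of your two suggestions for upgrading it to a permutation of $A$ works. A ``quantitative'' Nullstellensatz giving at least $n!$ nonvanishing tuples would be of no use, since the injective tuples number $n!$ out of $n^n$ and there is no pigeonhole step that forces any of the guaranteed tuples to be injective (you would need more than $n^n-n!$ of them, which no version of the Nullstellensatz provides); and restricting to sets $S_k\subsetneq A$ cannot force distinct coordinates unless the $S_k$ are pairwise disjoint, which is impossible since the Nullstellensatz needs $|S_k|\ge k$ and $\sum_k k>|A|$. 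The standard way to force injectivity is to multiply $F$ by the Vandermonde factor $\prod_{i<j}(x_j-x_i)$, but then the only admissible monomial is $(x_1\cdots x_n)^{n-1}$ and the question of whether its coefficient is nonzero mod $p$ is precisely the hard coefficient problem behind the existing polynomial-method results, which are only known for $|A|\le 12$; so your route collapses back onto the known obstruction rather than circumventing it. Note also that the hypothesis $|A|\le\log p/\log\log p$ (equivalently, roughly $n^n\le p$) plays no identifiable role anywhere in your argument.

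The paper takes an entirely different route, and the bound enters there in a concrete way: since $n^n\le p$, Lev's rectification theorem (refining Bilu--Lev--Ruzsa) gives an $(|A|-1)$-Freiman isomorphism from $A\cup\{0\}$ to a set of integers, so that validity of orderings (a condition expressible through equalities of sums of length at most $|A|-1$) transfers; one is then reduced to showing every finite set of nonzero integers has a valid ordering, which is done by an elementary induction separating positive and negative elements. If you want to salvage your approach, you would need a genuinely new idea for the sequence-to-permutation step; as written, the proposal has a gap at its central point.
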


We prove this theorem by applying a ``rectification'' result of of Lev \cite{Lev} (refining work of Bilu, Lev, and Ruzsa \cite{BLR}) and then establishing the ``integer version'' of Conjecture~\ref{conj:main}, as follows.

\begin{theorem}\label{thm:integers}
Every finite subset $A \subseteq \mathbb{Z} \setminus\{0\}$ has a valid ordering.
\end{theorem}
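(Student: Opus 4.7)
The plan is a strong induction on $n = |A|$; the base case $n = 1$ is immediate, since the lone element is nonzero.

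For the inductive step, choose $a^* \in A$ with $|a^*|$ maximum, and (after possibly negating $A$, which preserves validity of orderings) assume $a^* > 0$. Let $A' = A \setminus \{a^*\}$. By the inductive hypothesis, there is a valid ordering $(b_1, \ldots, b_{n-1})$ of $A'$, with partial sums $t_0 = 0, t_1, \ldots, t_{n-1}$, where $t_1, \ldots, t_{n-1}$ are distinct. I now attempt to insert $a^*$ between $b_j$ and $b_{j+1}$ for some $j \in \{0, 1, \ldots, n-1\}$; the resulting partial sums of $A$ form the multiset $\{t_1, \ldots, t_j\} \cup \{t_j + a^*, t_{j+1} + a^*, \ldots, t_{n-1} + a^*\}$, and a direct analysis shows this yields a valid ordering precisely when there is no pair $1 \leq k \leq j \leq m \leq n-1$ with $t_k = t_m + a^*$ (with the boundary case that for $j = 0$ one instead needs $0 \notin \{t_1, \ldots, t_{n-1}\}$). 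Each such ``bad pair'' $(k,m)$ corresponds to a contiguous subsum $b_{k+1} + \cdots + b_m = -a^*$, which by the maximality of $|a^*|$ is forced to have length at least $2$ unless $-a^* \in A'$, and which obstructs exactly the positions $j \in [k,m]$.

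If no $t_i$ equals $0$ for $i \geq 1$, insertion at $j = 0$ works immediately, since the new partial sums $\{t_i + a^* : 0 \le i \le n-1\}$ are distinct. Otherwise I need a usable $j \geq 1$, and my strategy is to strengthen the inductive hypothesis to the claim that whenever $\sum_{a \in A} a \neq 0$, one can choose the ordering so that $t_0, t_1, \ldots, t_n$ are \emph{all} distinct (equivalently, no nonempty contiguous subsum is zero). When $\sum_{b \in A'} b \neq 0$, the strengthened hypothesis supplies an ordering of $A'$ with no $t_i = 0$, so insertion at $j = 0$ succeeds; when $\sum_{b \in A'} b = 0$ (forcing $\sum_{a \in A} a = a^* \neq 0$ and $t_{n-1} = 0$ in every ordering of $A'$), I instead insert $a^*$ at the end, producing partial sums $t_1, \ldots, t_{n-1}, a^*$, which are distinct as long as $a^* \notin \{t_1, \ldots, t_{n-1}\}$ --- a condition that can be arranged using the freedom in choosing the valid ordering of $A'$.

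The main obstacle is carrying out the strengthened induction, specifically the subcase where $\sum_{b \in A'} b \neq 0$: I must show that insertion of $a^*$ preserves the property that $0, s_1, \ldots, s_n$ are all distinct, which may force inserting at some $j \geq 1$ when $-a^*$ coincides with some $t_i$. A careful pigeonhole or extremal argument on the bad intervals $[k,m]$ should then show that a valid $j$ exists, exploiting the maximality of $|a^*|$, which constrains the possible contiguous subsums of $(b_i)$ equal to $-a^*$ (each of which must be a signed combination of elements of absolute value at most $a^*$). This combinatorial step, showing that the bad intervals cannot collectively cover $\{0, 1, \ldots, n-1\}$, is where I expect essentially all of the work to lie.
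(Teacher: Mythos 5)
Your proposal is not a complete proof: the step you yourself identify as carrying ``essentially all of the work'' --- showing that the bad intervals $[k,m]$ coming from contiguous subsums equal to $-a^*$ cannot cover all admissible insertion positions --- is never carried out, and it is not clear that maximality of $|a^*|$ alone suffices to control it (a long block of small elements can certainly sum to $-a^*$, and several such blocks can overlap to cover many positions). The other subcase is also left hanging: when $\sum_{b\in A'} b = 0$ you append $a^*$ at the end and need $a^* \notin \{t_1,\ldots,t_{n-1}\}$, asserting this ``can be arranged using the freedom in choosing the valid ordering of $A'$'' without an argument; this is a genuine claim requiring proof (for $A=\{3,2,1,-3\}$, the valid ordering $1,2,-3$ of $A'$ has $3$ as a partial sum, so the freedom really is needed and must be justified). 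Your strengthened inductive statement (if $\sum_{a\in A} a \neq 0$, there is an ordering with no nonempty zero-sum consecutive block) is in fact true --- it follows from the construction in the paper, whose orderings have no zero-sum proper consecutive suborderings --- but your insertion-based route to it is not completed.

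The paper proves the theorem by a different and much shorter induction that sidesteps the insertion analysis entirely: write $A = P \cup (-N)$ with $P,N$ sets of positive integers, and prove by induction on $|A|$ that $P$ and $N$ admit orderings whose partial sums $p_1+\cdots+p_i$ and $n_1+\cdots+n_j$ never coincide except at $(i,j)=(0,0)$ and $(|P|,|N|)$; assuming (by symmetry) $\sum_{p\in P} p \ge \sum_{n\in N} n$ and $|P|\ge 2$, one removes a single positive element $p^*$ chosen so that the remaining positive sum still differs from $\sum_{n\in N} n$, applies the inductive hypothesis, and appends $p^*$ at the end of the positive ordering. Concatenating the reversed positive ordering with the negative ordering then gives a valid ordering of $A$ (indeed a two-sided one). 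If you want to rescue your approach, you would need to supply the covering argument and the end-insertion argument; alternatively, note that separating positives from negatives makes monotonicity do all the work that your bad-interval analysis was meant to do.
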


We later learned that Will Sawin \cite{will} proved a very similar result, using the same two main steps, in a MathOverflow post in 2015.  His argument and ours differ in the details of both steps, however, and we believe that it is useful to have both approaches recorded in the literature.

\section{Proofs}

We begin with the integer version of Conjecture~\ref{conj:main}.

\begin{proof}[Proof of Theorem~\ref{thm:integers}]
We will prove the stronger statement that there is a valid ordering of $A$ in which all of the positive elements appear before all of the negative elements.  Let $P,N$ be sets of positive integers such that $A=P \cup (-N)$.  It suffices to find orderings $p_1, \ldots, p_{|P|}$ of $P$ and $n_1, \ldots, n_{|N|}$ of $N$ such that
$$p_1+\cdots+p_i \neq n_1+\cdots+n_j \quad \text{unless $(i,j) \in \{(0,0),(|P|,|N|)\}$};$$
then the ordering
$$p_{|P|}, p_{|P|-1}, \ldots, p_1, -n_1, -n_2, \ldots, -n_{|N|}$$
of $A$ is valid.

We proceed by induction on $|A|$, where the base case $|A|=0$ is trivial.  Now suppose that $|A| \geq 1$.  Without loss of generality, we have $$\sum_{p \in P} p \geq \sum_{n \in N} n.$$
The desired conclusion is obvious if $|P|< 2$, so suppose that $|P| \geq 2$.  Then there is some $p^* \in P$ such that
$$\sum_{p \in P \setminus \{p^*\}} p \neq \sum_{n \in N} n;$$
let $P':=P \setminus \{p^*\}$.  The induction hypothesis provides orderings $p_1,...,p_{|P|-1}$ of $P'$ and $n_1,...,n_{|N|}$ of $N$ such that
$$p_1+\cdots+p_i \neq n_1+\cdots+n_j \quad \text{unless $(i,j) \in \{(0,0),(|P|-1,|N|)\}$}.$$
Our choice of $p^*$ ensures that we also have
$$p_1+\cdots+p_{|P|-1} \neq n_1+\cdots+n_{|N|}.$$
Now, the orderings $p_1,...,p_{|P|-1},p^*$ of $P$ and $n_1,...,n_{|N|}$ of $N$ are as desired.
\end{proof}

We turn next to the rectification result that we will use to deduce Theorem~\ref{thm:main} from Theorem~\ref{thm:integers}.  Let $A,B$ be subsets of (possibly different) abelian groups.  We say that a bijection $f: A \to B$ is an \emph{$\ell$-Freiman isomorphism} if
$$x_1+\cdots+x_\ell=y_1+\cdots+y_\ell \Longleftrightarrow f(x_1)+\cdots+f(x_\ell)=f(y_1)+\cdots+f(y_\ell)$$
for all $x_1, \ldots, x_\ell, y_1, \ldots, y_\ell \in A$ (allowing repetitions).  If $0 \in A$ and $B$ has no nonzero elements of order dividing $\ell$ (as when, for instance, the group containing $B$ is torsion-free), then every $\ell$-Freiman isomorphism $f$ satisfies $f(0)=0$; in this case, we conclude that $f$ is also a $k$-Freiman isomorphism for all $k<\ell$.  Bilu, Lev, and Ruzsa \cite{BLR} used the Pigeonhole Principle to show that small subsets of $\mathbb{F}_p$ are always Freiman-isomorphic, with high order, to sets of integers.  We will use the following (optimal) refinement due to Lev \cite{Lev}.

\begin{theorem}[\cite{Lev}]\label{thm:rectification}
Let $\ell \in \mathbb{N}$, let $p$ be a prime, and let $A \subseteq \mathbb{F}_p$.  If $|A| \leq \lceil\log p/\log \ell\rceil$, then $A$ is $\ell$-Freiman-isomorphic to a set of integers.
\end{theorem}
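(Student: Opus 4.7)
I would follow the classical ``rectification via dilates'' strategy: find a scalar $\mu \in \mathbb{F}_p^*$ for which the dilate $\mu A$ is confined to a short arc of $\mathbb{F}_p$, then use the lift of this arc to $\mathbb{Z}$ as the target set.

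\emph{Step 1 (Reduction to a short dilate).} The first subgoal is to show that if some $\mu \in \mathbb{F}_p^*$ satisfies $\mu A \subseteq I$ for an arc $I \subseteq \mathbb{F}_p$ of length strictly less than $p/\ell$, then lifting $I$ to an integer interval $\widetilde{I} \subseteq \mathbb{Z}$ of the same length yields an $\ell$-Freiman isomorphism $a \mapsto \widetilde{\mu a}$. Only the forward direction requires work: if $x_1 + \cdots + x_\ell = y_1 + \cdots + y_\ell$ in $\mathbb{F}_p$ with $x_i, y_j \in A$, then $\sum \widetilde{\mu x_i}$ and $\sum \widetilde{\mu y_j}$ agree modulo $p$ and each lies in a common integer interval of length less than $\ell \cdot (p/\ell) = p$; their difference is therefore a multiple of $p$ with absolute value smaller than $p$, and hence vanishes.

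\emph{Step 2 (Producing the dilate by pigeonhole).} To find such a $\mu$, I would partition $\mathbb{F}_p$ into $\ell$ consecutive arcs of length $\lceil p/\ell \rceil$ and, for each $\lambda \in \mathbb{F}_p^*$, record the signature $\Phi(\lambda) \in \{0,1,\ldots,\ell-1\}^{|A|}$ telling which arc contains each $\lambda a_i$. There are only $\ell^{|A|}$ signatures, and under the hypothesis $|A| \leq \lceil \log p/\log \ell\rceil$ this count is comparable to $p$, so we would like a collision $\Phi(\lambda_1) = \Phi(\lambda_2)$ with $\lambda_1 \neq \lambda_2$. Such a collision produces $\mu := \lambda_1 - \lambda_2 \in \mathbb{F}_p^*$ for which each $\mu a_i$, as a signed residue modulo $p$, has absolute value smaller than $p/\ell$.

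\emph{Step 3 (The main obstacle: the sharp constant).} The real difficulty lies in matching Lev's sharp exponent. A crude implementation of Step 2 only yields $\mu A \subseteq (-p/\ell,\, p/\ell)$, a centered window of length $2p/\ell$, which is a factor of two too wide for Step 1 to apply; the obvious remedy of using $2\ell$ arcs instead of $\ell$ degrades the bound to $\log p / \log(2\ell)$. Overcoming this factor-of-two loss is exactly what distinguishes Lev's refinement from the original Bilu--Lev--Ruzsa estimate, and is where I would expect to spend the bulk of the effort. I expect the key to be a more economical pigeonhole, for instance exploiting the $\lambda \leftrightarrow -\lambda$ symmetry on $\mathbb{F}_p^*$ to halve the effective number of signatures, or encoding $\lambda$ by half-arc data so that any collision forces \emph{one-sided} control on each $\mu a_i$; the target conclusion in either case is that $\mu A$ lies inside a single arc of length less than $p/\ell$ rather than a symmetric window of twice that size.
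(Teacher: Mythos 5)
First, note that the paper does not prove this statement: it is quoted from Lev \cite{Lev} and used as a black box, so there is no internal proof to compare against. Judged on its own terms, your proposal has a genuine gap, and you have correctly located it yourself. Steps 1 and 2 are a sound account of the Bilu--Lev--Ruzsa pigeonhole argument: Step 1 (a dilate contained in an arc of length less than $p/\ell$ lifts to an $\ell$-Freiman-isomorphic set of integers) is complete and correct, and Step 2 correctly produces, under the hypothesis $p>\ell^{|A|}$, a dilate $\mu A$ contained in the symmetric window $(-\lceil p/\ell\rceil,\lceil p/\ell\rceil)$. But, as you say in Step 3, this window has length about $2p/\ell$, so Step 1 does not apply to it; repairing this by using $2\ell$ arcs only proves rectifiability for $|A|<\log p/\log(2\ell)$, which is strictly weaker than the stated threshold $\lceil\log p/\log\ell\rceil$. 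Note also that the ceiling makes the stated bound at least $\log p/\log\ell$, so that $\ell^{|A|}\geq p$ is allowed and no collision is even guaranteed among $p$ dilates and $\ell^{|A|}$ signatures: the sharp statement cannot follow from this count alone. The remedies you float do not close the gap either: the $\lambda\leftrightarrow-\lambda$ symmetry halves the number of signature classes, which gains a factor of $2$ in $p$ rather than the needed factor of $2$ in $\ell$, and ``one-sided control'' does not follow from a signature collision, since $\mu a_i=\lambda_1a_i-\lambda_2a_i$ is inherently a difference of two points in a common arc and so is only controlled two-sidedly. Lev's proof of the sharp threshold is a genuinely different and more delicate argument, which is presumably why the paper cites it rather than reproving it.

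It is worth adding that the weaker conclusion you can actually reach would nearly suffice for the application at hand: Theorem~\ref{thm:main} only invokes rectification with $\ell=|A|-1$ and $|A|\leq\log p/\log\log p$, and $\log p/\log(2\ell)$ differs from $\log p/\log\ell$ by a factor $1+O(1/\log\log p)$ in that range, so your argument would recover Theorem~\ref{thm:main} for sets of size at most $(1-o(1))\log p/\log\log p$. But it does not prove the statement as written.
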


See \cite{GR} for further discussion of rectification principles in additive combinatorics.

We are now ready to prove Theorem~\ref{thm:main}.

\begin{proof}[Proof of Theorem~\ref{thm:main}]
Let $A':=A \cup \{0\}$.  Then $|A'| \leq \lceil\log p/\log(|A|-1)\rceil$, and Theorem~\ref{thm:rectification} provides an $(|A|-1)$-Freiman isomorphism $f$ from $A'$ to some set $B$ of integers.  Note that $0 \notin f(A)$, and that every valid ordering of $f(A)$ pulls back to a valid ordering of $A$ (since the conditions for an ordering to be valid can be described in terms of non-equalities of sums with length at most $|A|-1$).  The result now follows from Theorem~\ref{thm:integers}.
\end{proof}

\section{Remarks}

\begin{enumerate}
    \item The valid orderings constructed in our proofs of Theorems~\ref{thm:integers} and~\ref{thm:main} are ``two-sided'' in the sense that their reverses are also valid; this condition is equivalent to the absence of zero-sum proper consecutive suborderings.  In the setting of Conjecture~\ref{conj:main} (and its generalization to other abelian groups), is it always possible to find two-sided valid orderings?
    \item Say that an abelian group $G$ is \emph{sequenceable} (respectively, \emph{strongly sequenceable}) if every subset of $G \setminus \{0\}$ has a valid (respectively, two-sided valid) ordering.  The argument of Theorem~\ref{thm:integers} can be easily modified to show the stronger statement that an abelian group $H$ is strongly sequenceable if and only if $H \times \mathbb{Z}$ is strongly sequenceable.  (Theorem~\ref{thm:integers} corresponds to the case where $H$ is the trivial group.)  One can modify the proof of Theorem~\ref{thm:integers} as follows: Partition $A=P \cup Z \cup (-N)$ where $P,N$ are sets of elements with positive second coordinate and $Z$ is a set of elements with second coordinate $0$.  If $\sum_{z \in Z}z \neq 0$, then consider orderings of $A$ consisting of the elements of $P$, then the elements of $Z$, then the elements of $-N$.  If $\sum_{z \in Z}z=0$, then pick some suitable $z^* \in Z$ and consider orderings of $A$ consisting of the elements of $P$, then the elements of $Z \setminus \{z^*\}$, then the elements of $-N$, with $z^*$ placed at either the very beginning or the very end.  It could be interesting to formulate versions of this principle in nonabelian settings (see \cite{CDFO}).\footnote{Note added in revision: Such results for the infinite dihedral group and other semidirect products with $\mathbb{Z}$ have recently appeared in \cite{CDF2}.}
    \item By replacing Theorem~\ref{thm:rectification} with Lev's more general rectification criterion for abelian groups \cite{Lev}, we can extend Theorem~\ref{thm:main} to abelian groups with no elements of small torsion: If $G$ is an abelian group with no nonzero elements of order strictly smaller than $p$, then every subset $A \subseteq G \setminus \{0\}$ of size at most $\log p/\log\log p$ has a valid ordering.  (See the discussion in \cite{long,CP} for previous results in this direction.)
    \item It is known (see, e.g., \cite{GR}, following \cite{Freiman}) that even moderate-sized subsets of $\mathbb{F}_p$ can be rectified if one adds a small-doubling assumption; again, our arguments apply to this scenario.
    \item Combining the second and third remarks, we see that if $H_1$ is a strongly sequenceable abelian group and $H_2$ is an abelian group with no nonzero elements of order strictly smaller than $p$, then every subset $A \subseteq (H_1 \times H_2) \setminus\{(0,0)\}$ with $|\pi_{H_2}(A)| \leq \log p/\log\log p$ has a two-sided valid ordering.  In the context of finite cyclic groups, this significantly extends the results of \cite{long} and some of the results of \cite{CDFO}.
\end{enumerate}

\section*{Acknowledgements}
The author was supported in part by the NSF Graduate Research Fellowship Program under grant
DGE–203965.  I thank Stefan Steinerberger for bringing Graham's conjecture to my attention and for engaging in helpful discussions.  I thank Simone Costa for giving useful comments on a draft of this paper, and I thank Raphael Steiner for bringing the reference \cite{will} to my attention.

\end{document}